\documentclass[A4,11pt]{article}
\usepackage{geometry}

\usepackage{authblk}

\usepackage{amssymb,amsmath,amsthm,mathtools}

\usepackage{enumitem}
\usepackage[linktoc=all,hidelinks]{hyperref}

\DeclareMathOperator{\diag}{diag}
\DeclareMathOperator{\re}{Re}
\DeclareMathOperator{\sgn}{sgn}
\DeclareMathOperator*{\esssup}{ess\,sup}

\newcommand{\bra}[1]{\langle#1\rangle}
\newcommand{\bras}[1]{\left[#1\right]}

\newcommand{\latt}[2]{#1\mathbb Z\times #2\mathbb Z}
\newcommand{\gframe}[2]{\mathcal G(#1,#2)}
\newcommand{\glframe}[3]{\mathcal G(#1,\latt{#2}{#3})}

\newtheorem*{Thm*}{Theorem}
\newtheorem{Thm}{Theorem}
\newtheorem{Lm}{Lemma}
\newtheorem{Prop}{Proposition}

\title{Frame Sets and Zeros of Zak Transforms \\ of Extended Gaussians}
\author[1]{Wenchang Sun\thanks{\tiny sunwch@nankai.edu.cn; This work was supported by the National Natural Science Foundation of China (12571104)}}
\affil[1]{School of Mathematical Sciences and LPMC, Nankai University, Tianjin, China}
\author[2]{Weiqi Zhou\thanks{\tiny zwq@xzit.edu.cn}}
\affil[2]{School of Mathematics and Statistics, Xuzhou University of Technology, Xuzhou, Jiangsu Province, China}
\date{}							
						
\begin{document}
\maketitle
\begin{abstract}
Let $a,b,c\in\mathbb C$ with $\re(a)<0$, we show that the extended Gaussian $e^{ax^2+bx+c}$ has maximal frame set (i.e., its frame set consists of precisely all positive pairs $(\alpha,\beta)$ with $\alpha\beta<1$), and its Zak transform has a unique simple zero in the unit square $[0,1)^2$ (in particular, the zero is at the center of the unit square if $b=0$). These statements extend the same results of the usual Gaussian (the cases when $a<0$ and $b,c\in\mathbb R$), and add more instances to the observation that if a continuous Wiener function has maximal frame set, then its Zak transform has a unique simple zero in the unit square. The proof of the maximality of the frame set combines metaplectic representation with a classical density result of the standard Gaussian. The proof of the uniqueness of the zero relies on properties of the theta function.  \\

{\noindent
{\bf Keywords}: Gabor frames; metaplectic representation; Zak transforms \\[1ex]
{\bf 2020 MSC}: 42C15; 42C40}
\end{abstract}

\section{Introduction}
Let
\[T_t:f(x)\mapsto f(x-t), \quad M_{\xi}: f(x)\mapsto e^{2\pi i\xi x}f(x)\]
be respectively the \emph{translation (i.e., time shift) operator} and the \emph{modulation (i.e., frequency shift) operator} on $L^2(\mathbb R)$. Given $g\in L^2(\mathbb R)$ and $\Lambda\subset\mathbb R^2$, a \emph{Gabor system} on $L^2(\mathbb R)$ with \emph{window} $g$ and \emph{support} $\Lambda$ is defined as and denoted by
\[\gframe{g}{\Lambda}=\{M_aT_bg: (b,a)\in\Lambda\}.\]
A typical choice of $\Lambda$ would be the lattice $\latt{\alpha}{\beta}$ with $\alpha,\beta>0$. And the corresponding Gabor system is
\[\glframe{g}{\alpha}{\beta}=\{M_{\beta j}T_{\alpha k}g: j,k\in\mathbb Z\}.\]
Gabor systems are fundamental building blocks in signal processing and time-frequency analysis since they offer a natural way to discretize a function on the time-frequency plane. A trivial example of a Gabor system is the orthonormal basis of $L^2(\mathbb R)$ formed by taking the window to be the characteristic function on the unit interval $\chi_{[0,1)}$ and $\alpha=\beta=1$.

A set of functions $\{h_k\}_{k\in\mathbb Z}$ is called a \emph{frame} in a Hilbert space $H$ if there exist constants $A,B>0$ so that
\[A\|f\|_{H}^2\le\sum_{k\in\mathbb Z}|\bra{f,h_k}_H|^2\le B\|f\|_H^2\]
holds for any $f\in H$. If $\{h_k\}_{k\in\mathbb Z}$ is a frame, then both the analysis operator $f\mapsto \{\bra{f,h_k}_H\}_{k\in\mathbb Z}$ and the synthesis operator $\{\bra{f,h_k}_H\}_{k\in\mathbb Z}\mapsto f$ are bounded from above and below. Hence it allows a stable decomposition and reconstruction of $f$ into and from the coefficients $\{\bra{f,h_k}_H\}_{k\in\mathbb Z}$.

A \emph{Gabor frame} on $L^2(\mathbb R)$ is a Gabor system that forms a frame on $L^2(\mathbb R)$. The set of parameters $(\alpha,\beta)$ that makes $\glframe{g}{\alpha}{\beta}$ a frame is called the \emph{frame set} of $g$ and denoted by $F(g)$. The problem of determining $F(g)$ for an arbitrary function $g$ is a long standing difficult open problem in this field. Even for simple window functions such as $\chi_{[0,1)}$, the corresponding frame set could be fairly complicated (see \cite{janssen2003a}). The classical density theory points out that if $\alpha\beta>1$, then $\glframe{g}{\alpha}{\beta}$ is incomplete and thus can not possibly be a frame (see e.g., \cite[Theorem 3a]{heil2007} or \cite{baggett1990}).

The critical case $\alpha\beta=1$ is already completely characterized by the Zak transform of the window function. More importantly it has been excluded from frame sets of functions with good time-frequency concentration. This is due to the fact that in such case $\glframe{g}{\alpha}{\beta}$ is a frame if and only if the Zak transform of $g$ is essentially bounded away from both $0$ and $\infty$ (see e.g., \cite[Corollary 8.3.2]{groechenig2001}), and it is well known that if the transformed function is continuous, then it always admits zeros (see e.g., \cite[Lemma 8.4.2]{groechenig2001}) and thus can not be lowered bounded. A class of functions that will guarantee continuity after the Zak transform is the set of continuous Wiener functions.
\emph{Wiener functions} on $\mathbb R$ form a
Banach space equipped with the norm (see e.g., \cite[Chapter 6.1]{groechenig2001})
\[\|g\|_{W(\mathbb R)}=\sum_{k\in\mathbb Z}\esssup_{x\in[0,1)}|g(x+k)|<\infty.\]
It is thus a mixed norm space that is locally of $L^{\infty}$ bound and globally of $\ell^1$ decay. The subspace of continuous functions in it is denoted by $W_0(\mathbb R)$. That the frame set of a $W_0(\mathbb R)$ function (or its Fourier transform) can not contain any pair $(\alpha,\beta)$ with $\alpha\beta=1$ is known as the Amalgam Balian-Low theorem.

We shall say that a function $g$ \emph{has maximal frame set} (or $F(g)$ is maximal), if $F(g)$ contains all pairs $(\alpha,\beta)$ with $\alpha\beta<1$ and $\alpha,\beta>0$. A primal question is then to identify all $W_0(\mathbb R)$ functions whose frame sets are maximal. It was first conjectured in \cite{daubechies1990} that Gaussians possess such a property. This is proved in \cite{lyubarski1992, seip1992, wallsten1992}. Later the hyperbolic secant $1/(e^x+e^{-x})$ \cite{janssen2002}, the one sided exponential $e^{-x}\chi_{\mathbb R^+}(x)$ \cite{janssen1996} (actually this is not a $W_0(\mathbb R)$ function since it is not even continuous, its Zak transform has no zero and its frame set contains also the critical case $\alpha\beta=1$, see \cite[Section 3.2.1]{janssen2003a}) and the two sided exponential $e^{-|x|}$ \cite{janssen2003} are also found to have this property. Then apparently automorphisms of the $L^2$ space such as translations, modulations, dilations, scalings and Fourier transforms of these functions will inherit this property.

It has been noticed that these functions are not only positive but also totally positive. A function $g$ is called \emph{totally positive} if for all $n\in\mathbb N$ and any two sequences of non-decreasing numbers $x_1\le\ldots\le x_n$ and $y_1\le\ldots\le y_n$, the determinant of the matrix $[g(x_i-y_j)]_{i,j=1}^n$ is non-negative. Alternative $g$ is totally positive if and only if its Fourier transform factors into
\begin{equation} \label{EqTP}
\hat g(\xi)=ce^{-\gamma\xi^2}e^{2\pi i\nu\xi}\prod_{j=1}^{N}(1+2\pi i\nu_j\xi)e^{-2\pi i\nu_j\xi},
\end{equation}
where $c>0$, $\gamma\ge0$, $\nu,\nu_j\in\mathbb R$, $N\in\mathbb N\cup\{\infty\}$ and $0<\gamma+\sum_{j=1}^N\nu_j^2<\infty$. This is characterized in \cite{schoenberg1947,schoenberg1951,schoenberg1953}. In a series of papers \cite{groechenig2023, groechenig2018, groechenig2013} it is proved that if $N$ is finite or if $N=\infty$ but $\alpha\beta\in\mathbb Q$, then $\alpha\beta<1$ is in the frame set of $g$ (in particular, if $g$ is at the same time in the Feichtinger algebra, then $F(g)$ is open \cite{feichtinger2003}, and thereof maximal), which covers and also greatly extends all aforementioned known cases. Mysteriously the Zak transforms of all these functions (except the one-sided exponential, for which there is no zero) possess a unique simple zero in the unit square (see \cite{baxter1996, kloos2014, kloos2015, vinogradov2017}).

Is total positivity necessary for maximality of frame sets? If we modulate a totally positive function, or simply multiply it by a negative number, then it is not totally positive any more but its frame set stays invariant. In fact it is also pointed out in \cite{belov2023} that certain linear combinations of Cauchy kernels do admit maximal frame sets, these functions are neither totally positive nor time-frequency shifts or Fourier transforms of totally positive functions, but they are not Wiener functions either. Therefore to discuss necessity it perhaps makes sense to phrase it as ``is total positivity necessary for real non-negative Wiener functions to have maximal frame sets?''

The purpose of this paper is to report more instances of $W_0(\mathbb R)$ functions whose frame sets are maximal and whose Zak transforms also have unique simple zeros in the unit square. Let $\boldsymbol a,\boldsymbol b,\boldsymbol c\in\mathbb C$ with $\re(\boldsymbol a)<0$, we call
\[\varphi(x;\boldsymbol a,\boldsymbol b,\boldsymbol c)=e^{\boldsymbol ax^2+\boldsymbol bx+\boldsymbol c}, \quad x\in\mathbb R\]
an \emph{extended Gaussian}, and refer to $\varphi(x)=\varphi(x;-\pi,0,0)=e^{-\pi x^2}$ as the \emph{standard Gaussian}. We prove that 

\begin{Thm*}
Let $\boldsymbol a,\boldsymbol b,\boldsymbol c\in\mathbb C$ with $\re(\boldsymbol a)<0$, then the extended Gaussian $\varphi(x;\boldsymbol a,\boldsymbol b,\boldsymbol c)=e^{\boldsymbol ax^2+\boldsymbol bx+\boldsymbol c}$ has maximal frame set, and its Zak transform has a unique simple zero in the unit square $[0,1)^2$. In particular, if $\boldsymbol b=0$, then the zero is at $(1/2,1/2)$.
\end{Thm*}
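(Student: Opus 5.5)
The theorem has two independent claims: maximality of the frame set and the zero structure of the Zak transform. I would prove them separately.

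\emph{Maximal frame set.} Write $\boldsymbol a=-\pi(p-iq)$ with $p>0$ and $q\in\mathbb R$. Completing the square gives
\[\varphi(x;\boldsymbol a,\boldsymbol b,\boldsymbol c)=C\,e^{i\pi q x^2}\,e^{2\pi i\eta x}\,e^{-\pi p(x-x_0)^2}\]
for suitable $C\ne0$ and real $x_0,\eta$. Hence, up to a constant, it is a time-frequency shift of a chirp-multiplied dilated Gaussian $e^{i\pi qx^2}D_{p}\varphi$. Time-frequency shifts and nonzero constants preserve $F(g)$, so the question reduces to $g=e^{i\pi q x^2}e^{-\pi p x^2}$.

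Multiplication by the chirp $e^{i\pi q x^2}$ is a metaplectic operator $\mu(S)$ with symplectic matrix $S=\begin{pmatrix}1&0\\ q&1\end{pmatrix}$. It maps $\glframe{g_0}{\alpha}{\beta}$ to $\mathcal G(\mu(S)g_0, S(\latt{\alpha}{\beta}))$ up to phase factors, where $S(\latt{\alpha}{\beta})$ is a lattice of the same covolume $\alpha\beta$, though no longer rectangular. So $\glframe{g}{\alpha}{\beta}$ is a frame if and only if $\mathcal G(e^{-\pi p x^2},S^{-1}(\latt{\alpha}{\beta}))$ is a frame.

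For the Gaussian and an arbitrary lattice $\Lambda$, I would invoke the classical density theorem of Lyubarskii and Seip. Via the Bargmann transform, $\mathcal G(\varphi,\Lambda)$ is a frame if and only if $\Lambda$ is a sampling set for the Fock space, and for lattices this holds exactly when $\mathrm{vol}(\Lambda)<1$. A dilation handles $p\ne1$, which is again metaplectic. Together these give maximality.

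\emph{Zak transform.} With $Zg(x,\omega)=\sum_k g(x-k)e^{2\pi i k\omega}$, I would compute $Z\varphi$ directly. Expanding $\boldsymbol a(x-k)^2+\boldsymbol b(x-k)$ gives
\[Z\varphi(x,\omega)=e^{\boldsymbol ax^2+\boldsymbol bx+\boldsymbol c}\sum_k e^{\boldsymbol a k^2}e^{k(2\pi i\omega-2\boldsymbol a x-\boldsymbol b)}.\]
This is $e^{(\cdot)}\,\vartheta(z,\tau)$ with
\[\vartheta(z,\tau)=\sum_k e^{\pi i\tau k^2+2\pi i k z},\qquad \tau=\boldsymbol a/(\pi i),\qquad z=\omega+\frac{i\boldsymbol a x}{\pi}+\frac{i\boldsymbol b}{2\pi}.\]
Since $\re\boldsymbol a<0$, we have $\operatorname{Im}\tau>0$.

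The Jacobi theta function has, in each period parallelogram spanned by $1$ and $\tau$, exactly one zero, at $z\equiv\frac12+\frac\tau2 \pmod{\mathbb Z+\tau\mathbb Z}$, and this zero is simple. The map $(x,\omega)\mapsto z$ is a real-linear bijection $\mathbb R^2\to\mathbb C$: its Jacobian has determinant $\pm\re\boldsymbol a/\pi\ne0$, because the $x$-direction is $i\boldsymbol a/\pi=-\tau$, which is not real. It sends the lattice $\mathbb Z^2$ onto $\mathbb Z+\tau\mathbb Z$, since a shift $x\mapsto x+1$ gives $z\mapsto z-\tau$. Hence $[0,1)^2$ maps onto a fundamental domain, and there is exactly one zero there.

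For simplicity, I would check the order of vanishing. The zero of $\vartheta$ is simple as a holomorphic function, and the linear map has nonzero determinant, so the real Jacobian $|\vartheta'|^2\cdot|\det|\neq0$. In the sense of topological degree the zero therefore has index $\pm1$ and is nondegenerate.

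For $\boldsymbol b=0$, solving $\omega-\tau x\equiv\frac12+\frac\tau2$ gives $x=-\frac12\equiv\frac12$ and $\omega=\frac12$, so the zero is at $(1/2,1/2)$.

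\emph{Main obstacle.} I expect the main obstacle to be stating the lattice density result for non-rectangular (symplectically sheared) lattices cleanly, and keeping track of the metaplectic covariance. The theta-function part is routine once the correct normalization of $\tau$ is fixed.
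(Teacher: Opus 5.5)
Your proposal is correct and follows essentially the paper's route: maximality comes from metaplectic covariance, reducing to the Lyubarskii--Seip--Wallst\'en density theorem for the standard Gaussian on a lattice of covolume $\alpha\beta<1$. Your explicit chirp--dilation--time-frequency-shift decomposition replaces the paper's appeal to Folland's Proposition 4.73, and the zero structure comes, as in the paper, from writing $Z\varphi$ as a nonvanishing factor times a theta function. The one notable difference is that you handle general $\boldsymbol b$ (and real $\boldsymbol a$, which the paper's Lemma with $\lambda\neq 0$ skips) in one stroke through the affine map $(x,\omega)\mapsto \omega-\tau x+\frac{i\boldsymbol b}{2\pi}$, which sends $[0,1)^2$ onto a period parallelogram of $\mathbb Z+\tau\mathbb Z$. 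That gives uniqueness and non-degeneracy of the zero directly, whereas the paper first treats $\boldsymbol b=0$ and then moves the zero by a time-frequency shift.
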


The proof of the maximality combines metaplectic representation with a classical density result of the standard Gaussian. The proof of the uniqueness of the zero relies on properties of the theta function. We also include an exposition of known facts about zeros of the Zak transform in the appendix (as they are not directly related to this paper, but seem to be playing some mysterious roles in the analysis of frame sets).

\section{Notations and conventions}
In this article the Fourier transform of a function $f$ is denoted by $\mathcal F(f)$ or $\hat f$, and for $f\in L^1(\mathbb R)$ it takes the following integral form:
\[\mathcal Ff(\xi)=\hat f(\xi)=\int_{\mathbb R}f(x)e^{-2\pi i\xi x}dx.\]
The inverse Fourier transform of $f$ is denoted by $\mathcal F^{-1}f$.

It is customary and convenient to use the following notation for time-frequency shifts:
\[\pi(z)=\pi(a,b)=M_aT_b, \quad z=(a,b)^T\in\mathbb R^2.\]
(Hereafter $T$ in the superscript stands for transposition). Let $z'=(a',b')$, we also remind the readers about the commutativity relation that
\begin{equation} \label{EqComm}
\pi(z)\pi(z')=M_aT_bM_{a'}T_{b'}=e^{2\pi i(ab'-a'b)}M_{a'}T_{b'}M_{a}T_{b}=e^{2\pi i\bras{z,z'}}\pi(z')\pi(z),
\end{equation}
where 
\[\bras{z,z'}=ab'-a'b\]
is the \emph{symplectic form}. Some classical results were also stated using the \emph{Schr\"odinger representation} (see e.g., \cite[Chapter 1.3]{folland1989} and \cite[Chapter 6.1]{gosson2006}):
\begin{equation} \label{EqDefRho}
\rho(z)=\rho(b,a)=e^{\pi iab}T_{b}M_{a}, \quad z=(b,a)^T\in\mathbb R^2.
\end{equation}
If we take $z=(a,0)$ and $z'=(0,b)$ in \eqref{EqComm}, then it is easy to derive that
\begin{equation} \label{EqPiRho}
\pi(a,b)=e^{\pi iab}\rho(b,a).
\end{equation}

The dilation of a function $g$ by a parameter $\gamma>0$ will be denote by
\[g_{\gamma}(x)=g(\gamma x).\]
It is easy to see that
\[\|g\|_{L^2(\mathbb R)}^2=\int_{\mathbb R}|g(x)|^2dx=\gamma\int_{\mathbb R}|g(\gamma y)|^2dy=\gamma\|g_{\gamma}\|_{L^2(\mathbb R)}^2.\]
Similarly if $D_{\gamma}=\diag(1/\gamma,\gamma)$, then for any $z=(a,b)^T\in\mathbb R^2$ we have
\begin{equation} \label{EqFrameD}
|\bra{f,\pi(D_{\gamma}z)g}|=|\bra{f,M_{a/\gamma}T_{b\gamma}g}|=\gamma|\bra{f_{\gamma},M_{a}T_{b}g_{\gamma}}|=\gamma|\bra{f_{\gamma},\pi(z)g_{\gamma}}|.
\end{equation}

Let $\lambda\in\mathbb R\setminus\{0\}$ be a parameter, we use the notation
\[h(x;\lambda)=e^{-\pi i\lambda x^2}\]
for the chirp with parameter $\lambda$.

\section{Metaplectic operators and extended Gaussians} \label{SecMain}
A matrix $A\in\mathbb R^{2\times 2}$ is called a \emph{symplect matrix} if it keeps the symplectic form, i.e., $[Az,Az']=[z,z']$ holds for all $z,z'\in\mathbb R^2$. If $S=\begin{pmatrix}0 & 1 \\ -1 & 0\end{pmatrix}$, then since $[z,z']=\bra{z,Sz'}$, $A$ being a symplectic matrix is equivalent to $A^TSA=S$,  which is further equivalent to $\det(A)=1$. This can be seen from the straightforward computation
\[\begin{pmatrix}a & c \\ b & d\end{pmatrix}\begin{pmatrix}0 & 1 \\ -1 & 0\end{pmatrix}\begin{pmatrix}a & b \\ c & d\end{pmatrix}=\begin{pmatrix}0 & ad-bc \\ -(ad-bc) & 0\end{pmatrix}.\]
A \emph{metaplectic operator} $\mu(A)$ on $L^2(\mathbb R)$ associated with a symplectic matrix $A$ is a unitary linear operator that satisfies
\begin{equation} \label{EqDefMu}
\rho(Az)\mu(A)=\mu(A)\rho(z),\quad \forall z\in \mathbb R^2.
\end{equation}
If $A=S=\begin{pmatrix}0 & 1 \\ -1 & 0\end{pmatrix}$, then from \eqref{EqPiRho} and the relation $\mathcal FM_aT_b=T_aM_{-b}$ one can derive that $\mu(A)=\mathcal F$. Alternatively if $A=D_{\gamma}=\diag(1/\gamma,\gamma)$ is diagonal, then a change of variable indicates that $\mu(A)$ is a scaled (to be unitary) dilation.

In general, it is not simple to show that every symplectic matrix $A$ has an associated metaplectic operator (see \cite[Chapter 4]{folland1989} or \cite[Chapter 9]{groechenig2001}). For the $L^2(\mathbb R)$ case there is an elementary way to realize these operators explicitly: 

Let $f\mapsto h(\cdot\;;\lambda')*f(\cdot)$ be the multiplication by $\hat h_{\lambda'}$ in the frequency domain:
\[\left(h(\cdot\;;\lambda')*f(\cdot)\right)(x)=\mathcal F^{-1}\left(\widehat{h(\cdot\;;\lambda')}\cdot \widehat{f(\cdot)}\right)(x), \quad f\in L^2(\mathbb R).\]
Recall that (see the table in \cite[A-7]{kammler2000})
\begin{equation} \label{EqFH}
\widehat{h(\cdot\;;\lambda')}=c_{\lambda'}\cdot h(\cdot\;;-\frac{1}{\lambda'}),
\end{equation}
where $c_{\lambda'}=|\lambda'|^{-1/2}e^{-\sgn(\lambda')\pi i/4}=|\lambda'|^{-1/2}e^{\pm\pi i/4}$ comes from taking a square root of $\pm i$. Set
\[U_{\lambda}=\begin{pmatrix}1 & \lambda \\ 0 & 1 \end{pmatrix}, \quad L_{\lambda'}=\begin{pmatrix}1 & 0 \\ \frac{1}{\lambda'} & 1 \end{pmatrix}.\]
Then for any $z=(a,b)^T\in\mathbb R^2$ and any $f\in L^2(\mathbb R)$ we can readily compute that
\begin{align}
\pi(z)(h(x;\lambda)\cdot f(x))&=e^{2\pi ia x}\cdot e^{-\pi i\lambda(x-b)^2}\cdot f(x-b) \\
&=e^{2\pi ia x}\cdot e^{-\pi i\lambda x^2}\cdot e^{2\pi i\lambda b x}\cdot e^{-\pi i\lambda b^2} \cdot f(x-b) \\
&=e^{-\pi i\lambda b^2}\cdot h(x;\lambda) \cdot (M_{a+\lambda b}T_bf)(x) \\
&=e^{-\pi i\lambda b^2}\cdot h(x;\lambda) \cdot\left(\pi(U_{\lambda}z)f\right)(x), \label{EqCPMT}
\end{align}
and
\begin{align*}
\pi(z)(h(\cdot\;;\lambda')*f(\cdot))&=\mathcal F^{-1}\mathcal F\left(\pi(z)(h(\cdot\;;\lambda')*f(\cdot))\right) & \\
&=\mathcal F^{-1}\left(T_aM_{-b}(c_{\lambda'}\cdot h(\cdot\;;-\frac{1}{\lambda'})\cdot \hat f(\cdot))\right) & \text{by \eqref{EqFH}} \\
&=c_{\lambda'}\cdot e^{2\pi iab}\cdot \mathcal F^{-1}\left(M_{-b}T_a\left(h(\cdot\;;-\frac{1}{\lambda'})\cdot \hat f(\cdot)\right)\right) & \text{by \eqref{EqComm}} \\
&=c_{\lambda'}\cdot e^{2\pi iab}\cdot e^{\frac{\pi ia^2}{\lambda'}}\cdot \mathcal F^{-1}\left(h(\cdot\;;-\frac{1}{\lambda'})\cdot (M_{-b-\frac{a}{\lambda'}}T_a\hat f)(\cdot)\right) & \text{by \eqref{EqCPMT}} \\
&=e^{2\pi iab}\cdot e^{\frac{\pi ia^2}{\lambda'}}\cdot h(\cdot\;;\lambda')*(T_{b+\frac{a}{\lambda'}}M_af)(\cdot) & \text{by \eqref{EqFH}} \\
&=e^{-\frac{\pi ia^2}{\lambda'}}\cdot h(\cdot\;;\lambda')*(M_aT_{\frac{a}{\lambda'}+b}f)(\cdot) & \text{by \eqref{EqComm}} & \\
&=e^{-\frac{\pi ia^2}{\lambda'}}\cdot h(\cdot\;;\lambda')*\left(\pi(L_{\lambda'}z)f\right)(\cdot). &
\end{align*}
By applying an LU decomposition on $A$, followed by appropriate row/column scalings on the lower and upper triangular parts respectively, any symplectic matrix $A$ can be written into $A=LDU$ for some lower triangular matrix $L$, some upper triangular matrix $U$, and some diagonal matrix $D$ such that the main diagonal of both $L$ and $U$ are $1$, and $\det(D)=1$. Therefore with proper scalings $\mu(A)=\mu(L)\mu(D)\mu(U)$ can be accordingly constructed by composing the multiplications of chirps in the time domain and in the frequency domain respectively, with a dilation in the middle. This type of decomposition remains true in higher dimensions, see the integral formula in \cite[Theorem 4.51]{folland1989}.

\begin{Prop}(\cite[Proposition 4.73]{folland1989}) \label{PropEG}
Let $f\in L^2(\mathbb R)$, then $f$ is an extended Gaussian $\varphi(\cdot\;;\boldsymbol a,\boldsymbol b,\boldsymbol c)$ if and only if there are some $\boldsymbol C\in\mathbb C$, some symplectic matrix $A$, and some $p,q\in\mathbb R$ such that
\[f=\boldsymbol C\rho(p,q)\mu(A)\varphi\] 
holds. In particular, $\boldsymbol b=0$ if and only if $p=q=0$. 
\end{Prop}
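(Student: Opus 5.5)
We prove both directions by direct computation: explicit metaplectic operators for the ``if'' part, and an explicit factorization of a given extended Gaussian for the ``only if'' part. Throughout, $\varphi(x)=e^{-\pi x^2}$ is the standard Gaussian.

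\emph{The ``if'' direction.} The class $\mathcal E$ of extended Gaussians (together with the constant multiples $\boldsymbol C$) should be invariant under each factor. By the $LDU$ factorization above, $\mu(A)$ is, up to a unimodular constant, a composition of three kinds of operator:
\begin{itemize}[label=$\cdot$]
\item multiplication by a chirp $h(x;\lambda)$;
\item a unitary dilation $\gamma^{1/2}f(\gamma x)$;
\item convolution with a chirp $h(\cdot\,;\lambda')$, i.e.\ multiplication of $\hat f$ by $c_{\lambda'}h(\cdot\,;-1/\lambda')$.
\end{itemize}
Multiplying $e^{\boldsymbol ax^2+\boldsymbol bx+\boldsymbol c}$ by a chirp replaces $\boldsymbol a$ by $\boldsymbol a-\pi i\lambda$, which keeps $\re\boldsymbol a$ unchanged. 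A dilation replaces $\boldsymbol a$ by $\gamma^2\boldsymbol a$. For the convolution, compute the Fourier transform of an extended Gaussian by completing the square and using analytic continuation of the Gaussian integral:
\[
\mathcal F\bigl(e^{\boldsymbol ax^2+\boldsymbol bx+\boldsymbol c}\bigr)(\xi)
=\sqrt{\pi/(-\boldsymbol a)}\;
e^{(\boldsymbol b-2\pi i\xi)^2/(-4\boldsymbol a)+\boldsymbol c}.
\]
This is again extended Gaussian, with quadratic coefficient $\pi^2/\boldsymbol a$, and $\re(\pi^2/\boldsymbol a)<0$ because $\re(1/\boldsymbol a)=\re\boldsymbol a/|\boldsymbol a|^2$. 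Multiplying by a chirp in frequency and inverting the Fourier transform keeps us in $\mathcal E$ by the same two remarks. Finally, $\rho(p,q)$ is a translation and a modulation up to a phase, so it only changes $\boldsymbol b$ and $\boldsymbol c$. Since $\varphi\in\mathcal E$, every $\boldsymbol C\rho(p,q)\mu(A)\varphi$ is an extended Gaussian.

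\emph{The ``only if'' direction.} Given $\boldsymbol a=-\alpha-i\beta$ with $\alpha>0$, we first remove $\boldsymbol b$.
\begin{itemize}[label=$\cdot$]
\item Completing the square, $\boldsymbol ax^2+\boldsymbol bx=\boldsymbol a(x-x_0)^2+\text{const}$ with complex $x_0=-\boldsymbol b/(2\boldsymbol a)$.
\item Write $x_0=p+is$. Expanding $\boldsymbol a(x-p-is)^2$ shows that the imaginary shift contributes a linear term $-2is\boldsymbol a(x-p)$. Its real part is a real linear exponent, and its imaginary part is a modulation.
\item Absorb the imaginary part into $M_q$ for a suitable real $q$, and write the real linear exponent as a further real translation of the Gaussian profile, using $\alpha>0$.
\end{itemize}
The net effect is $f=\boldsymbol C\rho(p,q)\,e^{\boldsymbol ax^2}$ for some real $p,q$; we should solve the two real linear equations for $(p,q)$ from $\boldsymbol b$ explicitly.

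It remains to write $e^{\boldsymbol ax^2}=e^{-\alpha x^2}e^{-i\beta x^2}$ as a multiple of $\mu(A)\varphi$:
\begin{itemize}[label=$\cdot$]
\item choose $\gamma=\sqrt{\alpha/\pi}$, so that $\mu(D_\gamma)\varphi$ is a multiple of $e^{-\alpha x^2}$;
\item choose $\lambda=\beta/\pi$, so that multiplying by $h(\cdot\,;\lambda)$ produces $e^{-i\beta x^2}$.
\end{itemize}
Chirp multiplication is $\mu(U_\lambda)$ up to a phase and the normalization convention; equation \eqref{EqCPMT} shows it satisfies the intertwining relation up to a phase. Hence we may take $A=U_{\pm\lambda}D_\gamma$ (or its appropriate conjugate), and we need not pin down the exact matrix. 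The constant $\boldsymbol C$ collects $e^{\boldsymbol c}$, the normalizations and the phases.

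\emph{The claim $\boldsymbol b=0\iff p=q=0$.} Every $\mu(A)\varphi$ is even. Indeed, $\varphi$ is even, and each elementary factor (chirp multiplication, dilation, chirp convolution) commutes with the reflection $f(x)\mapsto f(-x)$. So $\mu(A)\varphi$ has no linear term, and an extended Gaussian with $\boldsymbol b=0$ is the representation with $p=q=0$. Conversely, if $(p,q)\ne 0$, then $\rho(p,q)$ applied to $e^{\boldsymbol a'x^2}$ produces the linear coefficient $-2\boldsymbol a'p+2\pi iq$. This is nonzero: if $p\ne 0$ its real part is $-2p\re\boldsymbol a'\ne 0$, and if $p=0$ it equals $2\pi iq\ne 0$.

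\emph{Main obstacle.} The delicate step is the bookkeeping of the linear term, since a complex shift must be converted into a real time shift plus a real frequency shift, with both parameters recovered exactly and uniquely. The check $\re\boldsymbol a<0$ is what makes this solvable: it is the same condition that makes the real linear exponent expressible as a real translation of the Gaussian.
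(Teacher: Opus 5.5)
Your proof is correct, and it takes a different, more self-contained route than the paper. The paper does not prove Proposition~\ref{PropEG} itself. It cites Folland's Proposition 4.73 and adds only one explicit computation: a chirp convolution of a dilated Gaussian, $h(\cdot\,;\lambda')*\varphi_{1/\gamma}$, equals $c_{\lambda'}\gamma\eta\,h(\cdot\,;v)\varphi_{\sqrt u}$, obtained from the complex square root $\eta^2=u+vi$ and \eqref{EqFC}.

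You argue differently in each part:
\begin{itemize}
\item For the ``if'' direction, you replace that single computation by the general Fourier transform of $e^{\boldsymbol ax^2+\boldsymbol bx+\boldsymbol c}$, and you check that the class is closed under every generator.
\item For the ``only if'' direction, you give an explicit factorization using only a dilation, a chirp multiplication and $\rho(p,q)$. So neither chirp convolution nor an $LDU$ factorization is needed there.
\item For the last clause, parity together with the injectivity of $(p,q)\mapsto-2\boldsymbol a'p+2\pi iq$ (for $\re\boldsymbol a'<0$) proves $\boldsymbol b=0\iff p=q=0$ for \emph{every} representation. The citation leaves this implicit.
\end{itemize}
The citation buys brevity and Folland's general setting. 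Your route gives explicit $A$, $p$, $q$ and stays entirely inside the paper's elementary framework.

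A few points to tidy up:
\begin{itemize}
\item Not every symplectic matrix has an $LDU$ factorization: it forces $A_{11}\neq0$, so $A=S$ fails. You inherit this inaccuracy from the paper's text. It is harmless here, because you have shown closure under the Fourier transform $\mu(S)$, and $S^{-1}A$ has nonzero $(1,1)$ entry whenever $A_{11}=0$.
\item Identifying an arbitrary operator satisfying \eqref{EqDefMu} with the constructed composition, up to a unimodular constant, uses irreducibility of $\rho$ (Schur's lemma). You should say so.
\item The phase in \eqref{EqCPMT} cancels exactly when you pass to $\rho$ via \eqref{EqPiRho}: $\rho(b,a)\bigl(h(\cdot\,;\lambda)f\bigr)=h(\cdot\,;\lambda)\,\rho(b,a+\lambda b)f$. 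So chirp multiplication is literally a metaplectic operator in the sense of \eqref{EqDefMu}, not merely one up to a phase.
\item Your ``main obstacle'' is not really one. With $\boldsymbol a=-\alpha-i\beta$, the equation $\boldsymbol b=-2\boldsymbol ap+2\pi iq$ gives $p=\re\boldsymbol b/(2\alpha)$ and $q=(\operatorname{Im}\boldsymbol b-2\beta p)/(2\pi)$ at once. The complex-shift detour can be dropped; note also that its $\re x_0$ is not the final $p$.
\item The ``if'' direction needs $\boldsymbol C\neq0$, which the statement tacitly assumes.
\end{itemize}
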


The relation between $A,C$ and $\boldsymbol a, \boldsymbol b, \boldsymbol c$ in Proposition \ref{PropEG} can be determined explicitly by computing out $h(\cdot\;;\lambda')*\varphi(\cdot)$: Let
\[u=\frac{\gamma^2}{\gamma^4+(\frac{1}{\lambda'})^2}, \quad v=\frac{\frac{1}{\lambda'}}{\gamma^4+(\frac{1}{\lambda'})^2}.\]
Recall the formula (see \cite[A-7]{kammler2000}) that if $\alpha,\beta\in\mathbb R$, $\alpha+\beta i\neq 0$ and $\alpha\ge |\beta|$, then
\begin{equation} \label{EqFC}
\mathcal F(e^{-\pi(\alpha+\beta i)^2x^2})(\xi)=\frac{1}{\alpha+\beta i}e^{-\frac{\pi\xi^2}{(\alpha+\beta i)^2}}.
\end{equation}
Observe that
\[h(\xi;-\frac{1}{\lambda'})\cdot\varphi_{\gamma}(\xi)=e^{\frac{\pi i\xi^2}{\lambda'}}\cdot e^{-\pi\gamma^2\xi^2}=e^{-\pi \xi^2(\gamma^2-\frac{1}{\lambda'}i)}=e^{\frac{-\pi \xi^2\left(\gamma^4+(\frac{1}{\lambda'})^2\right)}{\gamma^2+\frac{1}{\lambda'}i}}=e^{-\frac{\pi\xi^2}{u+vi}}.\]
Clearly $u>0$, which means that $u+vi$ is in the right half complex plane. Hence there exists some $\eta=\alpha+\beta i\neq 0$ with $\alpha\ge |\beta|$ (i.e., the complex angle of $\eta$ is between $-\pi/4$ and $\pi/4$) so that $\eta^2=u+vi$. It then follows from \eqref{EqFH} and \eqref{EqFC} that
\begin{align*}
\left(h(\cdot\;;\lambda')*\varphi_{\frac{1}{\gamma}}(\cdot)\right)(x)&=\mathcal F^{-1}\left(\widehat{h(\cdot\;;\lambda')}\cdot\widehat{\varphi_{\frac{1}{\gamma}}(\cdot)}\right)(x) \\
&=\mathcal F^{-1}\left(c_{\lambda'}h(\cdot\;;-\frac{1}{\lambda'})\cdot\gamma \varphi_{\gamma}(\cdot)\right)(x) \\
&=c_{\lambda'}\gamma\eta\cdot e^{-\pi(u+vi)x^2} \\
&=c_{\lambda'}\gamma\eta\cdot h(x;v)\cdot \varphi_{\sqrt u}(x).
\end{align*}

\section{Zak transforms of extended Gaussians}
The \emph{Zak transform} of $f\in L^2(\mathbb R)$ is a function on $\mathbb R^2$ defined as
\[Zf(t,\omega)=\sum_{k\in\mathbb Z}f(t-k)e^{2\pi ik\omega}.\]
It follows immediately from the definition that $Zf$ is continuous for $f\in W_0(\mathbb R)$. The unit square $[0,1)^2$ is called the \emph{fundamental domain} of $Z$ since not only that $Z$ is an isometry from $L^2(\mathbb R)$ to $L^2\left([0,1)^2\right)$ (it maps the orthonormal Gabor basis element $M_jT_k\chi_{[0,1)}$ to the orthonormal Fourier basis element $e^{2\pi i(jt+k\omega)}$) but also it has the quasi-periodicity that
\[Zf(t-j,\omega)=e^{-2\pi i j\omega}Z(t,\omega), \quad Zf(t,\omega-k)=Z(t,\omega),  \quad \forall j,k\in\mathbb Z,\]
which indicates that the values of $Zf$ on $\mathbb R^2$ are completely determined by its values on the unit square $[0,1)^2$. For $(a,b)\in\mathbb R^2$, the Zak transform intertwines with time-frequency shifts in the following way:
\begin{equation} \label{EqZMT}
Z(M_aT_bf)(t,\omega)=e^{2\pi ia(x-b)}Zf(t-b,\omega-a).
\end{equation}
See \cite[Chapter 8]{groechenig2001} for more of its properties.

We shall call $(t_0,\omega_0)$ a \emph{simple zero} of $Zf$ if $Zf(t_0,\omega_0)=0$ and there is a neighbourhood $\Omega$ of $(t_0,\omega_0)$ and constants $A,B$ such that $Zf$ is continuous on $\Omega$ and
\begin{equation} \label{EqSZ}
0<A\le \frac{|Zf(t,\omega)|}{\sqrt{(t-t_0)^2+(\omega-\omega_0)^2}}\le B<\infty
\end{equation}
holds for all $(t,\omega)\in\Omega\setminus\{(t_0,\omega_0)\}$. 

Theta functions are elliptic analogs of exponential functions. For convenience we will use the following form of the theta function, which is consistent with the convention and results in \cite{baxter1996}:
\[\Theta(\boldsymbol z,\boldsymbol q)=\sum_{k\in\mathbb Z}\boldsymbol q^{k^2}\boldsymbol z^k, \quad \boldsymbol z\in\mathbb C\setminus\{0\}, \boldsymbol q\in\mathbb C, |\boldsymbol q|<1.\]
This form is closest to the usual definition of
\[\Theta_3(\boldsymbol z,\boldsymbol q)=\sum_{k\in\mathbb Z}\boldsymbol q^{k^2}e^{2ki\boldsymbol z},\]
and can be easily translated into each other. As in \cite{baxter1996}, we also imposed the extra condition $|\boldsymbol q|<1$. Otherwise the propositions below will hinge on absolute convergence of $\sum_k|\boldsymbol q|^k$ and $\prod_k|1-\boldsymbol q^k|$.

This form of the theta function is closely related to the Zak transform of standard Gaussians (see e.g.,\cite{janssen2002}). In this article the following decomposition is relevant (\cite[Lemma 2.1]{baxter1996}, also available in \cite[Chapter 21.3, p.469]{whittaker1902}):
\[\Theta(\boldsymbol z,\boldsymbol q)=\left(\prod_{j=1}^{\infty}(1-\boldsymbol q^{2j})\right)\cdot\left(\prod_{k=0}^{\infty}(1+\boldsymbol q^{2k+1}z)(1+\boldsymbol q^{2k+1}\boldsymbol z^{-1})\right).\]
From this decomposition we immediately obtain that 
\begin{Prop}(\cite[Corollary 2.2]{baxter1996}) \label{PropTheta}
$\Theta(\boldsymbol z,\boldsymbol q)=0$ is only attained at $\boldsymbol z=-\boldsymbol q^m$ where $m$ can be any odd integer.
\end{Prop}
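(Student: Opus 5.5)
We argue directly from the product decomposition of $\Theta$ quoted just before the statement. We may assume $\boldsymbol z\neq 0$ and $|\boldsymbol q|<1$; if $\boldsymbol q=0$ then $\Theta\equiv 1$, which has no zeros. We also read the odd exponents $m$ as allowing negative values: for $m=-(2k+1)$ we have $\boldsymbol q^{m}=\boldsymbol q^{-2k-1}$, which makes sense since $\boldsymbol q\neq0$.

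The plan has three steps.
\begin{enumerate}
\item[(i)] Show that the prefactor $\prod_{j\ge1}(1-\boldsymbol q^{2j})$ never vanishes.
\item[(ii)] Show that the infinite product $\prod_{k\ge0}(1+\boldsymbol q^{2k+1}\boldsymbol z)(1+\boldsymbol q^{2k+1}\boldsymbol z^{-1})$ converges absolutely, locally uniformly in $\boldsymbol z\in\mathbb C\setminus\{0\}$.
\item[(iii)] Conclude that the product vanishes exactly when one of its factors does.
\end{enumerate}

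For (i), note that $|\boldsymbol q^{2j}|=|\boldsymbol q|^{2j}<1$, so each factor $1-\boldsymbol q^{2j}$ is nonzero. Moreover $\sum_j|\boldsymbol q|^{2j}<\infty$. By the standard criterion, if $\sum|a_n|<\infty$ and no factor $1+a_n$ vanishes, then $\prod(1+a_n)$ converges to a nonzero limit. Hence the prefactor is a nonzero constant.

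For (ii), we use that on a compact set $K\subset\mathbb C\setminus\{0\}$ both $|\boldsymbol z|$ and $|\boldsymbol z|^{-1}$ are bounded by some $R$. Then $\sum_k |\boldsymbol q|^{2k+1}(|\boldsymbol z|+|\boldsymbol z|^{-1})\le 2R\sum_k|\boldsymbol q|^{2k+1}<\infty$. This gives absolute and locally uniform convergence.

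Step (iii) is the only delicate point. The general fact is: if $\sum|a_n|<\infty$, then $\prod(1+a_n)=0$ if and only if some $1+a_n=0$. To see it, discard the finitely many indices with $|a_n|\ge 1/2$. The remaining tail equals $\exp\big(\sum\log(1+a_n)\big)$, where the principal logarithm satisfies $|\log(1+a_n)|\le 2|a_n|$, so the tail is finite and nonzero. The full product is then the finite product of the discarded factors times this nonzero tail.

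Applying this, $\Theta(\boldsymbol z,\boldsymbol q)=0$ if and only if one of the following holds for some $k\ge0$:
\begin{itemize}
\item $1+\boldsymbol q^{2k+1}\boldsymbol z=0$, that is, $\boldsymbol z=-\boldsymbol q^{-(2k+1)}$;
\item $1+\boldsymbol q^{2k+1}\boldsymbol z^{-1}=0$, that is, $\boldsymbol z=-\boldsymbol q^{2k+1}$.
\end{itemize}
As $k$ ranges over $\{0,1,2,\dots\}$, the exponents $\pm(2k+1)$ run through all odd integers $m$. So the zero set is exactly $\{-\boldsymbol q^m: m \text{ odd}\}$, and each of these points is attained as a zero. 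No further obstacle remains once the product decomposition from \cite{baxter1996} is assumed.
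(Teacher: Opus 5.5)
Your proposal is correct and takes the same route as the paper, which reads the zeros directly off the product decomposition of $\Theta$. You also supply the standard infinite-product facts that the paper leaves implicit: the prefactor does not vanish, the product converges absolutely, and such a product vanishes only if one of its factors does. Your reading of $m$ as ranging over negative odd integers as well matches the paper's later use of $m=-1$.
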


If we fix $\boldsymbol q$, take $\boldsymbol z=e^{2i\boldsymbol x}$ for $\boldsymbol x\in\mathbb C$ and set
\[H(\boldsymbol x)=\Theta(e^{2i\boldsymbol x},\boldsymbol q),\]
then we have 
\begin{Prop}(\cite[Chapter 21.3, p.469]{whittaker1902}) \label{PropTheta2}
$H(\boldsymbol x)$ is an entire function on $\mathbb C$ and all zeros of $H(\boldsymbol x)$ are simple zeros.
\end{Prop}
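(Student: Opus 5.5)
We prove the two assertions separately: that $H$ is entire from the defining series, and that its zeros are simple from the product decomposition of $\Theta$ quoted just before Proposition \ref{PropTheta}.

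\emph{Entireness.} Write $H(\boldsymbol x)=\sum_{k\in\mathbb Z}\boldsymbol q^{k^2}e^{2ki\boldsymbol x}$. On a compact set $K\subset\mathbb C$ with $|\mathrm{Im}\,\boldsymbol x|\le R$, each term is bounded by
\[|\boldsymbol q|^{k^2}e^{2|k|R}.\]
Since $|\boldsymbol q|<1$, the quadratic exponent $k^2\log|\boldsymbol q|$ eventually dominates the linear term $2|k|R$, so these bounds are summable. The Weierstrass M-test gives locally uniform convergence of entire summands, hence $H$ is entire. If $\boldsymbol q=0$ then $H\equiv 1$ has no zeros and there is nothing more to prove, so assume from now on $0<|\boldsymbol q|<1$.

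\emph{Product representation.} Substitute $\boldsymbol z=e^{2i\boldsymbol x}$ into the decomposition:
\[H(\boldsymbol x)=P\prod_{k=0}^{\infty}\big(1+\boldsymbol q^{2k+1}e^{2i\boldsymbol x}\big)\big(1+\boldsymbol q^{2k+1}e^{-2i\boldsymbol x}\big),\qquad P=\prod_{j\ge1}(1-\boldsymbol q^{2j}).\]
Here $P\neq 0$, since it is an absolutely convergent product with no vanishing factor. On $K$ we have $\sum_k|\boldsymbol q|^{2k+1}e^{2R}<\infty$, so the product converges absolutely and locally uniformly. By the standard theory of infinite products of holomorphic functions, the multiplicity of a zero $\boldsymbol x_0$ of $H$ is therefore the sum of the multiplicities of $\boldsymbol x_0$ as a zero of the individual factors.

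\emph{Each factor has only simple zeros.} Let $g(\boldsymbol x)=1+\boldsymbol q^{m}e^{\pm 2i\boldsymbol x}$ with $m$ a positive odd integer. At any zero of $g$ we have $\boldsymbol q^m e^{\pm2i\boldsymbol x}=-1$, so
\[g'(\boldsymbol x)=\pm 2i\,\boldsymbol q^m e^{\pm2i\boldsymbol x}=\mp 2i\neq 0.\]

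\emph{No two factors share a zero.} This step needs some care. Factors of the first type vanish exactly where $e^{2i\boldsymbol x}=-\boldsymbol q^{-(2k+1)}$, and factors of the second type exactly where $e^{2i\boldsymbol x}=-\boldsymbol q^{2k+1}$. This matches Proposition \ref{PropTheta}: zeros occur where $e^{2i\boldsymbol x}=-\boldsymbol q^m$ with $m$ odd, each odd $m$ corresponding to exactly one factor. Since $0<|\boldsymbol q|<1$, the map $m\mapsto|\boldsymbol q|^m$ is strictly decreasing. Hence distinct odd $m$ give distinct values $-\boldsymbol q^m$, and a given $\boldsymbol x_0$ is a zero of exactly one factor.

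\emph{Conclusion.} Near a zero $\boldsymbol x_0$, write $H=g\cdot G$, where $g$ is the unique factor vanishing at $\boldsymbol x_0$. The function $G$ (the constant $P$ times all remaining factors) is a locally uniformly convergent product with no vanishing factor at $\boldsymbol x_0$, so $G(\boldsymbol x_0)\neq0$. Then
\[H'(\boldsymbol x_0)=g'(\boldsymbol x_0)\,G(\boldsymbol x_0)\neq0,\]
so every zero of $H$ is simple.
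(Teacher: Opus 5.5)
Your proof is correct and follows essentially the same route as the source the paper cites for this statement (Whittaker--Watson, \S21.3). There, $H$ is entire by locally uniform convergence, and simplicity of the zeros is read off from the triple-product factorization: each factor $1+\boldsymbol q^{m}e^{\pm 2i\boldsymbol x}$ has only simple zeros, and the value $|e^{2i\boldsymbol x}|=|\boldsymbol q|^{\mp m}$ at those zeros shows that no two factors vanish at the same point.
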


\begin{Lm} \label{LmZero}
Let $\lambda\in\mathbb R\setminus\{0\}$ and $\gamma>0$, then $Z\left(h(\cdot\;;\lambda)\cdot\varphi_{\gamma}(\cdot)\right)$ has a unique simple zero at the center $(1/2,1/2)$ of the unit square $[0,1)^2$.
\end{Lm}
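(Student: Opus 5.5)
Set $g(x)=h(x;\lambda)\varphi_\gamma(x)=e^{-\pi(\gamma^2+i\lambda)x^2}$ and write $\tau=\gamma^2+i\lambda$, so $\re(\tau)>0$. The plan is to write $Zg$ explicitly as a theta function, read off its zeros from Proposition~\ref{PropTheta}, and get simplicity from Proposition~\ref{PropTheta2}. Expanding the square,
\[Zg(t,\omega)=\sum_{k\in\mathbb Z}e^{-\pi\tau(t-k)^2}e^{2\pi ik\omega}=e^{-\pi\tau t^2}\sum_{k\in\mathbb Z}\boldsymbol q^{k^2}\boldsymbol z^k=e^{-\pi\tau t^2}\,\Theta(\boldsymbol z,\boldsymbol q),\]
where $\boldsymbol q=e^{-\pi\tau}$ and $\boldsymbol z=e^{2\pi\tau t+2\pi i\omega}$. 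Since $|\boldsymbol q|=e^{-\pi\gamma^2}<1$, the hypotheses of the theta results hold. The prefactor $e^{-\pi\tau t^2}$ never vanishes and is smooth, so the zeros of $Zg$ and their orders coincide with those of $(t,\omega)\mapsto\Theta(\boldsymbol z(t,\omega),\boldsymbol q)$.

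\textbf{Step 1: locating the zeros.} By Proposition~\ref{PropTheta}, $\Theta=0$ exactly when $\boldsymbol z=-\boldsymbol q^m$ with $m$ odd. This means
\[2\pi\tau t+2\pi i\omega\equiv \pi i-\pi\tau m \pmod{2\pi i}.\]
Write this as $\tau(2t+m)=i(1-2\omega+2n)$ for some $n\in\mathbb Z$. Since $t,\omega$ are real, take real parts: $\gamma^2(2t+m)=\re\bigl(\overline{\tau}\,\tau(2t+m)\bigr)/\ldots$. It is cleaner to divide by $\tau$, which gives $2t+m=i(1-2\omega+2n)/\tau$. The left side is real. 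The right side is $i$ times a real number divided by $\tau$, and it is real only if that real number vanishes, because $i/\tau$ is not real: $\re(\tau)=\gamma^2>0$ forces $i/\tau\notin\mathbb R$. Hence $\omega=n+1/2$ and $t=-m/2$. With $m$ odd, the only solution in $[0,1)^2$ is $(1/2,1/2)$. This is where $\gamma>0$ is essential; $\lambda$ only enters through $\tau$, and the argument works for any real $\lambda$.

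\textbf{Step 2: simplicity.} Write $\Theta(\boldsymbol z,\boldsymbol q)=H(\boldsymbol x)$ with $\boldsymbol x=-i\pi(\tau t+i\omega)=\pi\omega-i\pi\tau t$, using $e^{2i\boldsymbol x}=\boldsymbol z$. Proposition~\ref{PropTheta2} says $H$ is entire with simple zeros, so near the zero $\boldsymbol x_0$ we have $H(\boldsymbol x)=(\boldsymbol x-\boldsymbol x_0)G(\boldsymbol x)$ with $G$ continuous and $G(\boldsymbol x_0)\neq0$. The map $(t,\omega)\mapsto\boldsymbol x=\pi\omega-i\pi\gamma^2t+\pi\lambda t$ is real-linear from $\mathbb R^2$ to $\mathbb C\cong\mathbb R^2$. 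Its matrix is $\begin{pmatrix}\pi\lambda&\pi\\-\pi\gamma^2&0\end{pmatrix}$ with determinant $\pi^2\gamma^2\neq0$, so it is invertible. Therefore $|\boldsymbol x-\boldsymbol x_0|$ is comparable to the Euclidean distance from $(t,\omega)$ to $(1/2,1/2)$. Combining this with the bounds on $|G|$ and on the nonvanishing prefactor near the point gives \eqref{EqSZ}.

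The main obstacle is the bookkeeping in Step 1: matching the exponent conventions between $\boldsymbol z$, $\boldsymbol q^m$ and the $2\pi i$-periodicity so as to show that only $t=1/2$, $\omega=1/2$ survive. The key fact there is that $i/\tau$ is non-real exactly because $\gamma>0$. Step 2 is routine once the real-linear change of variables is checked to be nondegenerate.
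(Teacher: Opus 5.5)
Your proposal is correct and follows essentially the paper's proof: the same theta-function representation of $Zg$, zeros located via Proposition~\ref{PropTheta} (your division by $\tau$ is equivalent to the paper's taking moduli and then substituting back), and simplicity via Proposition~\ref{PropTheta2} through the invertible real-linear map $(t,\omega)\mapsto\boldsymbol x$. Your factorization $H(\boldsymbol x)=(\boldsymbol x-\boldsymbol x_0)G(\boldsymbol x)$ gives both bounds in \eqref{EqSZ} at once, which is cleaner than the paper's detour through the auxiliary point $z_1$; just delete the abandoned ``take real parts'' fragment in Step~1.
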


\begin{proof}
We readily compute
\begin{align*}
Z\left(h(\cdot\;;\lambda)\cdot\varphi_{\gamma}(\cdot)\right)(t,\omega)&=\sum_{k\in\mathbb Z}e^{-\pi i\lambda(t-k)^2}\cdot e^{-\pi\gamma^2(t-k)^2}\cdot e^{2\pi ik\omega} \\
&=\sum_{k\in\mathbb Z}e^{-\pi i\lambda t^2}\cdot e^{2\pi i\lambda tk}\cdot e^{-\pi i\lambda k^2}\cdot e^{-\pi\gamma^2 t^2}\cdot e^{2\pi\gamma^2 tk}\cdot e^{-\pi\gamma^2 k^2}\cdot e^{2\pi ik\omega} \\
&=e^{-\pi(\gamma^2+\lambda i)t^2}\sum_{k\in\mathbb Z}e^{-\pi(\gamma^2+\lambda i)k^2}\cdot e^{2\pi\left(\gamma^2t+(\omega+\lambda t)i\right)k} \\
&=e^{-\pi(\gamma^2+\lambda i)t^2}\Theta(\boldsymbol z,\boldsymbol q),
\end{align*}
where
\[\boldsymbol z=e^{2\pi\left(\gamma^2t+(\omega+\lambda t)i\right)}, \quad \boldsymbol q=e^{-\pi(\gamma^2+\lambda i)}.\]
This is well defined since $|\boldsymbol q|<1$ and $\boldsymbol z\neq 0$ are obvious. By Proposition \ref{PropTheta}, zeros are only attained at $\boldsymbol z=-\boldsymbol q^m$ for odd integers $m$. Taking absolute values at both sides we get
\[e^{2\pi \gamma^2t}=|\boldsymbol z|=|\boldsymbol q^m|=e^{-\pi \gamma^2m}.\]
For $t\in[0,1)$ the equation can only hold at $t=1/2$ ($m$ will not be an odd integer for other values of $t$), in which case $m=-1$. Plugging this back in we obtain
\[e^{2\pi\left(\frac{\gamma^2}{2}+(\omega+\frac{\lambda}{2})i\right)}=\boldsymbol z=-\boldsymbol q^{-1}=-e^{\pi(\gamma^2+\lambda i)},\]
i.e., $e^{2\pi\omega i}=-1$, and for $\omega\in[0,1)$ this is only attained at $\omega=1/2$. Therefore $(1/2,1/2)$ is the only zero of $Z(h_{\lambda}\cdot\varphi_{\gamma})$ in the unit square $[0,1)^2$. \footnote{Since $h(x;\lambda)\cdot\varphi_{\gamma}(x)$ is an even function, the existence of a zero at the center of the unit square $[0,1)^2$ can also be asserted by Proposition \ref{PropZero}\ref{Prop0E} in the appendix, but the uniqueness of this zero is non-trivial.}

Finally since $\boldsymbol q$ does not depend on the variables $t,\omega$, by Proposition \ref{PropTheta2} we see that the zero is simple with respect to the complex variable
\[\boldsymbol x=\frac{2\pi\left(\gamma^2 t+(\omega+\lambda t)i\right)}{2i}=\pi(\omega+\lambda t-\gamma^2 ti).\]
We set
\[z=\begin{pmatrix}\omega \\ t \end{pmatrix}, \quad z_0=\begin{pmatrix}1/2 \\ 1/2 \end{pmatrix}, \quad U=\pi\cdot\begin{pmatrix}1 & \lambda \\ 0 & -\gamma^2\end{pmatrix}, \quad z_1=U^{-1}z_0,\]
where $U^{-1}$ exists since $\gamma\neq0$. Then there are constants $C,C'$ so that on a neighborhood $\Omega$ of $z_0$ we have ($\|\cdot\|$ is the Euclidean norm)
\begin{align*}
|Z\left(h(\cdot\;;\lambda)\cdot\varphi_{\gamma}(\cdot)\right)(t,\omega)|^2\ge C|x-\frac{1}{2}-\frac{1}{2}i|^2&=C(\pi\omega+\pi\lambda t-\frac{1}{2})^2+C(\pi\gamma^2t+\frac{1}{2})^2 \\
&=C\|Uz-z_0\|^2=C\|U(z-z_1)\|^2\ge C'\|z-z_1\|^2,
\end{align*}
where the first inequality holds since by Proposition \ref{PropTheta2} the zero is simple with respect to the variable $x$, and the last inequality holds since $\gamma\neq0$ and thus $U:\mathbb R^2\to\mathbb R^2$ is lower bounded. 

Now if $z_1=z_0$, then clearly $\|z-z_1\|^2=\|z-z_0\|^2$ holds for any $z$. Otherwise there exists a subneighborhood $\Omega'$ of $z_0$ that does not contain $z_1$. Then for any $z\in\Omega'$, $\|z-z_1\|^2$ is lower bounded while $\|z-z_0\|^2$ is upper bounded. Hence there exists $C''$ so that
\[\|z-z_1\|^2\ge C''\|z-z_0\|^2, \quad \forall z\in\Omega.\]
This builds a lower bound for the simplicity of the zero required in \eqref{EqSZ}. 

Next, both partial derivatives
\[\frac{\partial}{\partial t}Z\left(h(\cdot\;;\lambda)\cdot\varphi_{\gamma}(\cdot)\right)(t,\omega)=\frac{\partial}{\partial t}\left(\sum_{k\in\mathbb Z}e^{-\pi i\lambda(t-k)^2}\cdot e^{-\pi\gamma^2(t-k)^2}\cdot e^{2\pi ik\omega}\right)=-2\pi(\gamma^2+\lambda i)\cdot\sum_{k\in\mathbb Z}(t-k)\cdot e^{-\pi(\gamma^2+\lambda i)(t-k)^2}\cdot e^{2\pi ik\omega}\]
and 
\[\frac{\partial}{\partial \omega}Z\left(h(\cdot\;;\lambda)\cdot\varphi_{\gamma}(\cdot)\right)(t,\omega)=\frac{\partial}{\partial \omega}\left(\sum_{k\in\mathbb Z}e^{-\pi i\lambda(t-k)^2}\cdot e^{-\pi\gamma^2(t-k)^2}\cdot e^{2\pi ik\omega}\right)=2\pi i k\cdot\sum_{k\in\mathbb Z}e^{-\pi(\gamma^2+\lambda i)(t-k)^2}\cdot e^{2\pi ik\omega}\]
exist and are continuous (since $h(x;\lambda)\cdot\varphi_{\gamma}(x)$ has exponential decay), therefore an upper bound required in \eqref{EqSZ} can be obtained from differentiability. Together we can conclude that $(1/2,1/2)$ is indeed a simple zero of $Z\left(h(\cdot\;;\lambda)\cdot\varphi_{\gamma}(\cdot)\right)$.
\end{proof}

\section{Main Result}
Let $\Lambda\subset\mathbb R^2$ be a \emph{uniformly discrete set}, i.e., 
\[\inf_{\substack{\lambda\neq \lambda' \\ \lambda,\lambda'\in\Lambda}}\|\lambda-\lambda'\|>0.\]
Fix a compact set $E\subset\mathbb R^2$ of measure $1$, let
\[n^-(r)=\min_{t\in\mathbb R^2}|T_t(rE)\cap \Lambda|, \quad r>0\]
be the smallest number of points in the intersection of $\Lambda$ among all possible translations of $rE$, the \emph{lower uniform density} \cite{landau1967} of $\Lambda$ is then defined as
\[D^-(\Lambda)=\liminf_{r\to\infty}\frac{n^-(r)}{r^2}.\]
If $A\in\mathbb R^{2\times 2}$ is non-singular, and $\Lambda=A\mathbb Z^2$ is a lattice, then $D^-(\Lambda)=1/\det(A)$.

The following is essentially \cite[Theorem 1.1]{seip1992, wallsten1992}, see also the exposition in \cite[Theorem 14, A.1, A.2]{heil2007} for the relation between the Gabor frame set of the standard Gaussian and sampling sets in Bargmann-Fock spaces.
\begin{Prop}(\cite[Theorem 1.1]{seip1992, wallsten1992}) \label{PropMFG}
Let $\Lambda$ be a uniformly discrete set, then $\gframe{\varphi}{\Lambda}$ is a frame if and only if $D^-(\Lambda)>1$.
\end{Prop}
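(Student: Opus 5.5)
The plan is to reduce the statement to a sampling problem in the Bargmann--Fock space and then invoke the density theorem of Seip and Wallst\'en; in the paper this proposition is imported as a black box, so what follows is the route by which it is established. Let $\mathcal B$ be the Bargmann transform, a unitary map from $L^2(\mathbb R)$ onto the Fock space $\mathcal F^2$ of entire functions $F$ with $\int_{\mathbb C}|F(w)|^2e^{-\pi|w|^2}\,dA(w)<\infty$. The first step is the standard identity
\[
|\bra{f,\pi(a,b)\varphi}|=\sqrt{2}^{\,-1/2}\,|\mathcal Bf(\bar w)|\,e^{-\pi|w|^2/2},\qquad w=b+ia ,
\]
with the normalisation constant adjusted to $\|\varphi\|_2$. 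It follows from writing $\bra{f,M_aT_b\varphi}$ as a Gaussian integral and completing the square. Consequently $\gframe{\varphi}{\Lambda}$ is a frame if and only if $\overline{\Lambda}$, viewed as a subset of $\mathbb C$, is a \emph{sampling set} for $\mathcal F^2$. Since conjugation is an isometry of $\mathbb R^2$, it preserves both uniform discreteness and $D^-$, so the claim becomes: a uniformly discrete $\Gamma\subset\mathbb C$ is sampling for $\mathcal F^2$ if and only if $D^-(\Gamma)>1$.

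\emph{Sufficiency} ($D^->1$ implies sampling) follows Seip's argument. Suppose sampling fails. Then there are unit-norm $F_n\in\mathcal F^2$ with $\sum_{\gamma\in\Gamma}|F_n(\gamma)|^2e^{-\pi|\gamma|^2}\to0$. Using the Fock-space translations $F\mapsto F(\cdot-w)e^{\pi\bar w\,\cdot-\pi|w|^2/2}$, which are isometric and which shift $\Gamma$, one recentres so that the $F_n$ retain mass near the origin. A weak-limit (Beurling-type) argument then produces a weak limit set $\Gamma'$ of translates of $\Gamma$ and a nonzero $F\in\mathcal F^2$ vanishing on $\Gamma'$, where $D^-(\Gamma')\ge D^-(\Gamma)>1$; uniform discreteness is what makes these limits well behaved. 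The contradiction comes from Jensen's formula: an entire function with $|F(w)|\lesssim e^{\pi|w|^2/2}$ can have at most roughly $r^2$ zeros in a disc of area $\pi r^2$ (counting area in units of $1$). A zero set of density strictly above $1$ therefore forces growth beyond $e^{\pi|w|^2/2}$, contradicting $F\in\mathcal F^2$. The margin $D^->1$ is needed to absorb the logarithmic error terms in Jensen's formula. One may equivalently compare with the Weierstrass $\sigma$-function of a lattice of density $1$, which is the extremal case.

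\emph{Necessity} ($\Gamma$ sampling implies $D^->1$) goes in two parts. First, $D^-\ge1$ comes from Landau's method: compare the sampling inequality with the eigenvalue counting of the concentration operator of $\mathcal F^2$ on a large disc, whose trace is about its area and whose eigenvalues cluster at $0$ and $1$. Second, the boundary case $D^-=1$ must be excluded. The sampling property is stable under small perturbations and under weak limits. If $D^-(\Gamma)=1$, one shows there is a slightly dilated set $(1+\epsilon)\Gamma$ which is still sampling but has density below $1$, contradicting the first part.

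I expect this last openness step to be the main obstacle. It requires the quantitative estimate that sampling constants control perturbations, together with a compactness argument over weak limits, rather than a soft density count. Since all of this is exactly \cite[Theorem 1.1]{seip1992, wallsten1992}, in the paper itself I would only verify the Bargmann identity and the invariance of $D^-$ under conjugation, and cite the rest.
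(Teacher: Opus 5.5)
Your plan matches the paper's. The paper gives no argument of its own: it imports the Seip--Wallst\'en theorem and refers to Heil's exposition for exactly the Gabor--Fock dictionary you check. Your Bargmann identity is correct with the constant $2^{-1/4}$, and $w\mapsto\bar w$ preserves uniform discreteness and $D^-$, so as a citation-based proof this is fine.

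The background sketch of the sufficiency half, however, contains a step that fails as stated. Take unit-norm $F_n\in\mathcal F^2$ with $\sum_{\gamma\in\Gamma}|F_n(\gamma)|^2e^{-\pi|\gamma|^2}\to0$. You cannot in general recentre so that mass is retained near the origin. The reason is that $\sup_z|F_n(z)|e^{-\pi|z|^2/2}$ may tend to $0$; Fock translations preserve this quantity, so every weak limit of translates is then $0$.

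This genuinely happens. Take $\Gamma=\mathbb Z+i\mathbb Z$ with Weierstrass $\sigma$-function $\sigma$, and the $\mathcal F^2$-normalised functions $\sigma(z)/(z(z-w))$, $w\in\Gamma$, $|w|\to\infty$. Their $\ell^2$ sample norm and their weighted sup norm are both $O((\log|w|)^{-1/2})$.

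The standard repair is to run Beurling's argument in $\mathcal F^\infty_\alpha$ (weight $e^{-\alpha|z|^2}$) for some $\pi<\alpha<\pi D^-(\Gamma)$:
\begin{enumerate}
\item Normalise the weighted sup norm and recentre at near-extremal points. This does produce a nonzero limit, lying in $\mathcal F^\infty_\alpha$.
\item Your Jensen argument still applies to that limit, since it uses only the growth bound.
\item To pass back to $\mathcal F^2$, multiply $F\in\mathcal F^2$ by $e^{(\alpha-\pi)(\bar wz-|w|^2/2)}$. Then apply the $\mathcal F^\infty_\alpha$-sampling inequality at $z=w$ and integrate in $w$; this gives $\mathcal F^2$-sampling.
\end{enumerate}
So the strict inequality is not needed merely to absorb lower-order terms in Jensen's formula. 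It is needed because the density-one lattice is the zero set of $\sigma$, which lies in $\mathcal F^\infty$. It is also needed to leave room for some $\alpha>\pi$.

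Two smaller slips. First, Jensen permits about $\pi r^2$ zeros (one per unit area) in a disc of radius $r$, not $r^2$. Second, $(1+\epsilon)\Gamma$ is not a small perturbation of $\Gamma$, since points move by $\epsilon|\gamma|$. In the necessity part you should instead recast it as $\Gamma$ being sampling for the weight $e^{-\pi(1+\epsilon)^2|z|^2}$, and prove openness of sampling in the weight parameter.

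None of this affects the paper, whose proof of this proposition is only the citation.
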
 

\begin{Thm} \label{ThmMain}
Let $\boldsymbol a,\boldsymbol b,\boldsymbol c\in\mathbb C$ with $\re(\boldsymbol a)<0$,  then the extended Gaussian $\varphi(x;\boldsymbol a,\boldsymbol b,\boldsymbol c)=e^{\boldsymbol ax^2+\boldsymbol bx+\boldsymbol c}$ has maximal frame set, and its Zak transform has a unique simple zero in the unit square $[0,1)^2$. In particular, if $\boldsymbol b=0$, then the zero is at $(1/2,1/2)$.
\end{Thm}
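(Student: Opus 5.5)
We split the statement into the frame set part and the Zak transform part, and reduce both to the standard Gaussian $\varphi$ via Proposition \ref{PropEG}.

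\emph{Maximality of the frame set.} By Proposition \ref{PropEG} write $\varphi(\cdot\,;\boldsymbol a,\boldsymbol b,\boldsymbol c)=\boldsymbol C\rho(p,q)\mu(A)\varphi$ with $\boldsymbol C\neq0$ and $A$ symplectic. The factor $\boldsymbol C\rho(p,q)$ is a nonzero multiple of a unitary time-frequency shift. By \eqref{EqComm} and \eqref{EqPiRho}, conjugating $\pi(z)$ by $\rho(p,q)$ only produces unimodular phases, so $\glframe{\cdot}{\alpha}{\beta}$ is a frame for $\varphi(\cdot\,;\boldsymbol a,\boldsymbol b,\boldsymbol c)$ if and only if it is a frame for $g=\mu(A)\varphi$. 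Next, \eqref{EqDefMu} together with \eqref{EqPiRho} gives
\[\pi(z)\mu(A)\varphi=e^{i\theta(z)}\mu(A)\rho(A^{-1}z')\varphi=e^{i\theta'(z)}\mu(A)\pi(\tilde A z)\varphi,\]
where $z'$ is $z$ with coordinates swapped and $\tilde A$ is $A^{-1}$ conjugated by that swap, hence still of determinant $1$. Since $\mu(A)$ is unitary, we get $|\bra{f,\pi(z)g}|=|\bra{\mu(A)^{-1}f,\pi(\tilde Az)\varphi}|$. Therefore $\gframe{g}{\latt{\alpha}{\beta}}$ is a frame exactly when $\gframe{\varphi}{\Lambda}$ is a frame, where $\Lambda=\tilde A(\latt{\beta}{\alpha})$; here the order of coordinates follows the convention $\pi(a,b)=M_aT_b$. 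The set $\Lambda$ is a lattice, hence uniformly discrete, and $D^-(\Lambda)=1/(\alpha\beta\det\tilde A)=1/(\alpha\beta)$. Proposition \ref{PropMFG} then shows that it is a frame if and only if $\alpha\beta<1$. This step is routine; the only care needed is in tracking the coordinate conventions between $\rho$ and $\pi$.

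\emph{Zak transform, case $\boldsymbol b=0$.} We reduce to Lemma \ref{LmZero}. Write $\boldsymbol a=-\pi(\gamma^2+\lambda i)$ with $\gamma>0$ and $\lambda\in\mathbb R$. Then $\varphi(x;\boldsymbol a,0,\boldsymbol c)=e^{\boldsymbol c}h(x;\lambda)\varphi_\gamma(x)$, and multiplication by $e^{\boldsymbol c}\neq0$ changes neither zeros nor simplicity. If $\lambda\neq0$, Lemma \ref{LmZero} applies directly. If $\lambda=0$, the computation in that lemma goes through verbatim with $\boldsymbol q=e^{-\pi\gamma^2}$, because there $U$ is invertible due to $\gamma\neq0$ alone. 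Alternatively one cites the known result for real Gaussians. In either case the unique simple zero is at $(1/2,1/2)$.

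\emph{Zak transform, case $\boldsymbol b\neq0$.} Complete the square: $\boldsymbol ax^2+\boldsymbol bx=\boldsymbol a(x+\boldsymbol b/2\boldsymbol a)^2-\boldsymbol b^2/4\boldsymbol a$. Put $s=-\re(\boldsymbol b/2\boldsymbol a)\in\mathbb R$, so that $\boldsymbol b/2\boldsymbol a=-s+i\sigma$. Expanding $\boldsymbol a(x-s+i\sigma)^2$ splits off a factor $e^{2i\sigma\boldsymbol a(x-s)}$. Its real part $e^{-2\sigma\im(\boldsymbol a)(x-s)}$ is a real exponential, which is not a time-frequency shift. The cleaner route is therefore to use Proposition \ref{PropEG}: $f=\boldsymbol C\rho(p,q)f_0$ with $f_0=\mu(A)\varphi$ an extended Gaussian with $\boldsymbol b=0$. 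By \eqref{EqPiRho} and \eqref{EqZMT}, $Zf(t,\omega)$ equals a unimodular continuous factor times $Zf_0(t-p,\omega-q)$, up to the coordinate convention. By quasi-periodicity, the zeros of $Zf$ in $\mathbb R^2$ are exactly the translate by $(p,q)$ of the zero set $(1/2,1/2)+\mathbb Z^2$ of $Zf_0$. Exactly one of these lies in $[0,1)^2$, and simplicity in the sense of \eqref{EqSZ} is preserved, since both the translation and the unimodular factor are harmless. The main obstacle is making sure that $f_0$ really has the form $h(\cdot\,;\lambda)\varphi_\gamma$ up to a constant. This follows from the last computation in Section \ref{SecMain}: $\mu(A)\varphi$ is a constant times $h(\cdot\,;v)\varphi_{\sqrt u}$, possibly after a further dilation and chirp multiplication, which keep this form. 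It can also be seen directly, since Proposition \ref{PropEG} with $p=q=0$ gives an extended Gaussian with $\boldsymbol b=0$, which is the previous case.
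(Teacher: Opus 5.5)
Your proposal is correct and follows essentially the paper's own proof. Proposition~\ref{PropEG} and the intertwining relation \eqref{EqDefMu} reduce maximality to Proposition~\ref{PropMFG} for a unimodular image of $\latt{\alpha}{\beta}$ (density $1/(\alpha\beta)$); Lemma~\ref{LmZero} handles $\boldsymbol b=0$; and for $\boldsymbol b\neq0$ the zero is transported by $\rho(p,q)$ via \eqref{EqZMT} and quasi-periodicity.

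Your explicit treatment of real $\boldsymbol a$ (i.e.\ $\lambda=0$, which the paper's choice $\lambda\in\mathbb R\setminus\{0\}$ silently excludes) is a small genuine improvement. The completing-the-square detour is unnecessary: taking $p=-\re(\boldsymbol b)/(2\re\boldsymbol a)$ with a suitable real $q$ directly gives $e^{\boldsymbol ax^2+\boldsymbol bx+\boldsymbol c}$ as a constant multiple of $\rho(p,q)e^{\boldsymbol ax^2}$.
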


\begin{proof}
Maximality of the frame set: Let $\alpha,\beta$ be an arbitrary pair that satisfies $\alpha,\beta>0$ and $\alpha\beta<1$. By Proposition \ref{PropEG}, there is some $C\in\mathbb C$, some $p,q\in\mathbb R$ and some symplectic matrix $A$ such that $\varphi(\cdot\;;\boldsymbol a,\boldsymbol b,\boldsymbol c)=C\rho(p,q)\mu(A)\varphi(\cdot)$ holds. Since scalings and time-frequency shifts do not change the frame set, we see that $\glframe{\varphi(\cdot\;;\boldsymbol a,\boldsymbol b,\boldsymbol c)}{\alpha}{\beta}$ is a frame if and only if $\glframe{\mu(A)\varphi}{\alpha}{\beta}$ is a frame. Now if $x=(x_1,x_2)\in\diag(\alpha,\beta)\mathbb Z^2, \tilde x=(x_2,x_1)^T$, $\tilde y=(y_2,y_1)^T=A^{-1}\tilde x$ and $y=(y_1,y_2)^T$, then combining \eqref{EqPiRho} and \eqref{EqDefMu} we get
\[\pi(x)\mu(A)\varphi=e^{\pi ix_1x_2}\rho(\tilde x)\mu(A)\varphi=e^{\pi ix_1x_2}\rho(A\tilde y)\mu(A)\varphi=e^{\pi ix_1x_2}\mu(A)\rho(\tilde y)\varphi=e^{\pi ix_1x_2}\mu(A)e^{-\pi iy_1y_2}\pi(y)\varphi,\]
which further leads to
\[\bra{f,\pi(x)\mu(A)\varphi}=\bra{f,e^{\pi ix_1x_2}\mu(A)e^{-\pi iy_1y_2}\pi(y)\varphi}=\bra{e^{-\pi ix_1x_2}\mu(A^T)e^{\pi iy_1y_2}f,\pi(y)\varphi}.\]
The map $f\mapsto e^{-\pi ix_1x_2}\mu(A^T)e^{\pi iy_1y_2}f$ is unitary and does not change the $L^2$ norm, therefore $\glframe{\mu(A)\varphi}{\alpha}{\beta}$ being a frame is further equivalent to $\gframe{\varphi}{A^{-1}\diag(\alpha,\beta)\mathbb Z^2}$ being a frame. The lower uniform density of $A^{-1}\diag(\alpha,\beta)\mathbb Z^2$ is $1/(\det(A)\alpha\beta)$ which is strictly larger than $1$ since $\det(A)=1$ and $\alpha\beta<1$, therefore by Proposition \ref{PropMFG}, $\gframe{\varphi}{A^{-1}\diag(\alpha,\beta)\mathbb Z^2}$ is indeed a frame.

Zeros of the Zak transform: If $\boldsymbol a\in\mathbb C$ has negative real part, then we can write it into $\boldsymbol a=-\pi\gamma^2-\lambda\pi i$ for some $\gamma>0$ and some $\lambda\in\mathbb R\setminus\{0\}$. Therefore Lemma \ref{LmZero} shows that $Ze^{\boldsymbol ax^2}=Z\left(h(\cdot\;;\lambda)\cdot\varphi_{\gamma}(\cdot)\right)$ has a unique simple zero at the center of the unit square $[0,1)^2$. And the equality $Ze^{\boldsymbol ax^2+\boldsymbol c}=e^{\boldsymbol c}Ze^{\boldsymbol ax^2}$ further indicates that the same is true for $Ze^{\boldsymbol ax^2+\boldsymbol c}$, this proves the $\boldsymbol b=0$ case. If $\boldsymbol b\neq0$, then by Proposition \ref{PropEG}, there is some $\boldsymbol C\in\mathbb C$, some $p,q\in\mathbb R$ such that $e^{\boldsymbol ax^2+\boldsymbol bx+\boldsymbol c}=\boldsymbol C\rho(p,q)e^{\boldsymbol ax^2+\boldsymbol c}$, then \eqref{EqZMT} shows zeros of $Ze^{\boldsymbol ax^2+\boldsymbol bx+\boldsymbol c}$ differ from zeros of $Ze^{\boldsymbol ax^2+\boldsymbol c}$ only by a translation of $(p,q)$, thus it still has a unique simple zero at $(1/2-p,1/2-q) \bmod \mathbb Z^2$ in the unit square $[0,1)^2$.
\end{proof}

It is also clear that $\varphi(\cdot\;;\boldsymbol a,\boldsymbol b,\boldsymbol c)\in W_0(\mathbb R)$, therefore by the Amalgam Balian-Low theorem, the critical case $\alpha\beta=1$ is not in the frame set of $\varphi(\cdot\;;\boldsymbol a,\boldsymbol b,\boldsymbol c)$.

\section*{Appendix: Facts about zeros of the Zak transform}
It easily follows from \eqref{EqFrameD} that $\glframe{g}{\alpha}{\beta}$ is a frame if and only if $\glframe{g_{1/\beta}}{\alpha\beta}{}$ is. The Zak transform method is then particularly effective in analyzing frame sets of the critical case and other integer oversampling cases since in such cases it diagonalizes the frame operator and makes it a multiplication operator \cite[Corollary 8.3.2, Equation 8.21]{groechenig2001}, i.e., if $\alpha=1/n$ with $n\in\mathbb N$, then $\glframe{g_{\gamma}}{\alpha}{}$ is a frame if and only if there are constants $A,B$ such that $0<A\le \sum_{j=0}^{n-1}|Zg_{\gamma}(t-\frac{j}{n}, \omega)|^2\le B<\infty$ holds for almost every $t,\omega$. Therefore in these cases locations of zeros of $Zg_{\gamma}$ play a prominent role, since as long as $Zg_{\gamma}$ is continuous, then its essential lower boundedness on the unit square $[0,1)^2$ (which by quasi-periodicity is equivalent to its global essential lower boundedness) is equivalent to having no zero in it. The other cases are more complicated. But in general they can be characterized as (\cite{janssen1995}, also stated as \cite[Theorem 2.3]{groechenig2018}): $\glframe{g_{\gamma}}{\alpha}{}$ is a frame if and only if there are constants $A,B>0$ such that
\begin{equation} \label{EqCharR}
A\|f\|_{L^2([0,1))}^2\le\sum_{k\in\mathbb Z}\left|\int_0^1f(\omega)Zg_{\gamma}(t-k\alpha,
\omega)d\omega\right|^2\le B\|f\|_{L^2([0,1))}^2
\end{equation}
holds for all $f\in L^2([0,1))$ and all $t\in[0,1)$.

To understand how the estimates change upon dilation is often a major difficulty since it is hard to relate and control $Zg_{\gamma}$ through $Zg$. Another issue is that unlike in the critical case or in other integer oversampling cases, here merely lifting the lower bound of $|Zg_{\gamma}|$ does not seem to help. Indeed, for an individual $g_{\gamma}$, if we fix $t$, then for each $k\in\mathbb Z$, the function $h_k^{(t)}(\omega)=Zg_{\gamma}(t-k\alpha,\omega)$ is a univariate function of $\omega$, and the sum in \eqref{EqCharR} becomes $\sum_k|\bra{f,\overline{h_k^{(t)}}}_{L^2([0,1))}|^2$. Therefore \eqref{EqCharR} is equivalent to requiring $\{h_k^{(t)}\}_k$ to be a frame of $L^2([0,1))$, but even if each $|h_k^{(t)}|$ is away from $0$, together they can still be collided together for some $f$ to be orthogonal to all of them to make the sum in \eqref{EqCharR} vanish.

Despite that, as mentioned, mysteriously so far all functions that found to have maximal frame sets possess at most one simple zero in the unit square $[0,1)^2$. Hence we would like to provide an overview of what is known (other than Theorem \ref{ThmMain}) about existence, simplicity and uniqueness of zeros of Zak transforms.
\begin{Prop} \label{PropZero}
Existence:
\begin{enumerate}[leftmargin=*, label=(\alph*)]
\item If $Zf$ is continuous, then it has a zero in the unit square $[0,1)^2$. \label{Prop0C}
\item If $f$ is real valued, then zeros of $Zf$ in the unit square $[0,1)^2$ are symmetric with respect to the line $\omega=1/2$. If in addition $Zf$ is continuous, then $Zf$ must have a zero on the line $\omega=1/2$. \label{Prop0R}
\item If $f$ is an even function, then $Zf(1/2,1/2)=0$. \label{Prop0E}
\item If $f$ is an odd function, then $Zf(0,0)=Zf(1/2,0)=Zf(0,1/2)=0$. \label{Prop0O}
\item If $f,\hat f\in W(\mathbb R)$ and $f$ is an eigenfunction of $\mathcal F$ whose eigenvalue is not $-i$, then $Zf(1/2,1/2)=0$. \label{Prop0F}
\end{enumerate}

Uniqueness and simplicity:
\begin{enumerate}[leftmargin=*, label=(\roman*)]
\item If $f$ is even, continuous, and can be written as $g+T_1g$ on $[0,+\infty)$ for some strictly convex, non-negative $g\in L^1(\mathbb R)$, then $Zf$ has a unique zero in the unit square $[0,1)^2$. \label{Prop0U}
\item If $f\in L^1(\mathbb R)$ is non-negative, supported on and strictly decreasing on $[0,+\infty)$, then there is some constant $A$ so that $|Zf|>A>0$ holds everywhere. \label{Prop0N}
\item If $f\in L^1(\mathbb R)$ is even, continuous, non-negative and strictly convex on $[0,+\infty)$, then $Zf$ has no zero on $[0,1)^2\setminus\{(t,\omega):t=1/2\}$. \label{Prop0U2}
\item If $f$ is an exponential $B$-spline of finite order or a totally positive function (excluding the one sided exponential and its translations) or the convolution of these two types of functions, then $Zf$ has a unique zero in the unit square $[0,1)^2$, the zero is simple and is on the line $\omega=1/2$. \label{Prop0S}
\end{enumerate}
\end{Prop}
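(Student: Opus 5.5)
The statement is a list of facts, so I would prove each item separately from the definition $Zf(t,\omega)=\sum_k f(t-k)e^{2\pi ik\omega}$ and its quasi-periodicity.

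\emph{Existence items.}
\begin{itemize}[leftmargin=*]
\item[\ref{Prop0C}] This is the classical argument (\cite[Lemma 8.4.2]{groechenig2001}). Suppose $Zf$ is continuous and never zero. Then on $\mathbb R^2$ it has a continuous logarithm $\log Zf=\ln|Zf|+i\theta$. Quasi-periodicity gives
\[\theta(t+1,\omega)=\theta(t,\omega)+2\pi\omega+2\pi k_1,\qquad \theta(t,\omega+1)=\theta(t,\omega)+2\pi k_2,\]
with integers $k_1,k_2$ that are constant by continuity. Traverse the boundary of the unit square and add the four increments. The total change must vanish, but it equals $2\pi$ times a non-integer plus integers, which is a contradiction.
\item[\ref{Prop0R}] For real $f$ we have $\overline{Zf(t,\omega)}=Zf(t,-\omega)=Zf(t,1-\omega)$, which gives the symmetry. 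For the zero on the line $\omega=1/2$: the function $\omega\mapsto Zf(t,1/2)$ is real for every $t$, and $Zf(t+1,1/2)=-Zf(t,1/2)$. So the real continuous function $t\mapsto Zf(t,1/2)$ changes sign on $[0,1]$, and the intermediate value theorem yields a zero.
\item[\ref{Prop0E}] If $f$ is even, reindexing $k\mapsto 1-k$ gives $Zf(1/2,1/2)=\sum_k f(1/2-k)(-1)^k=-Zf(1/2,1/2)$.
\item[\ref{Prop0O}] Odd $f$ is handled the same way at the three listed points.
\item[\ref{Prop0F}] Use $Z\hat f(\omega,-t)=e^{2\pi it\omega}Zf(t,\omega)$ (Poisson summation, valid since $f,\hat f\in W$). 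If $\hat f=cf$, evaluating at $(1/2,1/2)$ gives $c\,Zf(1/2,-1/2)=e^{\pi i/2}Zf(1/2,1/2)$. Periodicity in $\omega$ turns this into $(c-i)Zf(1/2,1/2)=0$. The resulting sign convention has to be checked against the stated exceptional eigenvalue $-i$.
\end{itemize}

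\emph{Uniqueness and simplicity items.} These I would not reprove; I would cite the literature.
\begin{itemize}[leftmargin=*]
\item[\ref{Prop0U}, \ref{Prop0U2}] These are convexity results from \cite{baxter1996,kloos2014}. The idea is to write $Zf(t,\omega)$ for even $f$ as a cosine-type series, $\sum_k f(t-k)e^{2\pi ik\omega}$ paired with $k\leftrightarrow -k$. Strict convexity then makes the coefficients form a decreasing convex sequence, and Fejér-type positivity of such cosine series shows the real part is nonvanishing off $t=1/2$.
\item[\ref{Prop0N}] This is Janssen's argument for one-sided decreasing $f$. For $t\in[0,1)$ only $k\le 0$ contribute, so $Zf$ is a power series in $e^{2\pi i\omega}$ with strictly decreasing positive coefficients. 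The Eneström–Kakeya theorem shows it has no zeros on the closed unit disc, and compactness then gives the uniform lower bound $A$.
\item[\ref{Prop0S}] This is exactly the content of \cite{kloos2014,kloos2015,vinogradov2017}, which I would quote.
\end{itemize}

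\emph{Main obstacle.} Apart from those citations, the delicate part is \ref{Prop0F}. Getting the Poisson-summation identity and the sign of the eigenvalue consistent with the paper's Fourier convention is where I expect the difficulty.
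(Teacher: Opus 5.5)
\textbf{The gap is in (e).} Your identity $Z\hat f(\omega,-t)=e^{2\pi it\omega}Zf(t,\omega)$ has the wrong sign in the phase. The paper's Fourier convention is fixed, so this is not a convention question that can be settled later.

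Apply Poisson summation to $x\mapsto f(t-x)e^{2\pi ix\omega}$. Its Fourier transform at $n$ is $e^{2\pi it(\omega-n)}\hat f(\omega-n)$, which gives
\[Zf(t,\omega)=e^{2\pi it\omega}\,Z\hat f(\omega,-t),\qquad\text{i.e.}\qquad Z\hat f(\omega,-t)=e^{-2\pi it\omega}Zf(t,\omega).\]
This is the form the paper uses (\cite[Proposition 8.2.2]{groechenig2001}).

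Your version produces $(c-i)Zf(\tfrac12,\tfrac12)=0$. That would exempt the eigenvalue $i$ and force a zero for the eigenvalue $-i$, which is false. For example, $f(x)=xe^{-\pi x^2}$ satisfies $\hat f=-if$, yet
\[Zf(\tfrac12,\tfrac12)=\sum_k(\tfrac12-k)(-1)^ke^{-\pi(\frac12-k)^2}\approx e^{-\pi/4}-3e^{-9\pi/4}\neq0.\]
With the correct identity, $\hat f=cf$ and $1$-periodicity in the second variable give $Zf(\tfrac12,\tfrac12)=ic\,Zf(\tfrac12,\tfrac12)$. Hence $Zf(\tfrac12,\tfrac12)=0$ unless $ic=1$, i.e.\ unless $c=-i$, exactly as stated. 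Items (b)--(d) otherwise follow the paper's arguments.

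\textbf{Smaller points.}

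\emph{Item (a).} The four boundary increments sum to $\pm2\pi$, coming from the $2\pi\omega$ term at $\omega=1$ versus $\omega=0$. It is not ``$2\pi$ times a non-integer'', though the contradiction is the same.

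\emph{Items (i)--(iii).} The paper, like you, only cites, but its source is Janssen \cite[Sections 3.2.1--3.2.3]{janssen2003a}, not \cite{baxter1996,kloos2014}, which treat Gaussians and totally positive functions/exponential B-splines. Your sketch for (iii) is also wrong as stated.
\begin{itemize}
\item For $f(x)=e^{-|x|}$ and $t\in(1/2,1)$, $\re Zf(t,\cdot)$ is positive at $\omega=0$ and negative at $\omega=1/2$, so the real part does vanish off $t=1/2$.
\item Even for $t<1/2$, write the real part as $\tfrac{c_0}{2}+\sum_{k\ge1}c_k\cos(2\pi k\omega)$ with $c_0=2f(t)$ and $c_k=f(k-t)+f(k+t)$. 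These coefficients lose convexity as $t\to1/2$: $c_0-2c_1+c_2\to f(5/2)-f(3/2)<0$. So Fej\'er-type positivity does not apply there.
\end{itemize}

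\emph{Item (ii).} The Enestr\"om--Kakeya step is fine (via $(1-z)P(z)$ for the infinite series). However, $Zf$ need not be continuous in $t$; the one-sided exponential jumps at $t\in\mathbb Z$. The uniform bound should instead come from compactness in $\ell^1$ of the coefficient sequences $(f(t+n))_{n\ge0}$ together with their one-sided limits. All of these remain positive and strictly decreasing, and the bound follows from dominated convergence with the summable majorant $(f(n))_n$.
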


\ref{Prop0C} is a classical result, and we refer to \cite[Lemma 8.4.2]{groechenig2001} for a proof and references therein for its origin. \ref{Prop0R} to \ref{Prop0F} are mentioned at many places. But they are often mixed with stronger assumptions. We give a simple sketch here: 

For \ref{Prop0R}, if $f$ is real valued then
\[\overline{Zf(t,\omega)}=\sum_{k\in\mathbb z}\overline{f(t-k)e^{2\pi ik\omega}}=\sum_{k\in\mathbb z}f(t-k)e^{-2\pi ik\omega}=Zf(t,-\omega)=Zf(t,1-\omega),\]
where the last equality follows from the quasi-periodicity. This indicates that all zeros are symmetric with respect to the line $\omega=1/2$. If $Zf$ is continuous, then on this line we have that
\[Zf(t,\frac{1}{2})=\sum_{k\in\mathbb Z}f(t-k)\cdot (-1)^k,\]
is a real valued continuous function of $t$ that satisfies $Zf(0,1/2)=-Zf(1,1/2)$, which leads to the existence of a zero for some $t\in[0,1)$ by the intermediate value property.

For \ref{Prop0E}, if $f$ is even, then
\[Zf(t,\omega)=\sum_{k\in\mathbb Z}f(t-k)e^{2\pi ik\omega}=\sum_{k\in\mathbb Z}f(k-t)e^{2\pi ik\omega}.\]
Replacing $k$ with $-j$ we obtain
\[\sum_{k\in\mathbb Z}f(k-t)e^{2\pi ik\omega}=\sum_{j\in\mathbb Z}f(-j-t)e^{-2\pi ij\omega}=\sum_{j\in\mathbb Z}f(j+t)e^{-2\pi ij\omega}.\]
Plugging in $(t,\omega)=(1/2,1/2)$ we see that both equations are signed sums of $f$ at half integers. But they have opposite signs, i.e., $Zf(1/2,1/2)=-Zf(1/2,1/2)$, which means it is $0$.

For \ref{Prop0O}, if $f$ is odd, then similar as above we have
\[Zf(t,\omega)=-\sum_{k\in\mathbb Z}f(k-t)e^{2\pi ik\omega}=\sum_{j\in\mathbb Z}f(j+t)e^{-2\pi ij\omega}.\]
Plugging in $(t,\omega)=(1/2,0)$ we get sums of $f$ at half integers with opposite signs. Thus $Zf(1/2,0)=0$, and it is also clear from oddity by direct computation that $Zf(0,0)=Zf(0,1/2)=f(0)=0$.

For \ref{Prop0F}, suppose that $\lambda\neq-i$ is the eigenvalue. Then we simply apply the formula (\cite[Proposition 8.2.2]{groechenig2001}) that
\[Zf(t,\omega)=e^{2\pi it\omega}Z\hat f(\omega,-t), \quad f,\hat f\in W(\mathbb R)\]
followed by the quasi-periodicity to get
\[Zf(t,\omega)=e^{2\pi it\omega}Z\hat f(\omega,-t)=\lambda e^{2\pi it\omega}Zf(\omega,-t)=\lambda e^{2\pi it\omega}Zf(\omega,1-t).\]
Plugging in $t=\omega=1/2$ we obtain $Zf(1/2,1/2)=i\lambda Zf(1/2,1/2)$, and $i\lambda \neq 1$ since $\lambda\neq -i$, which shows \ref{Prop0F}.

\ref{Prop0U} to \ref{Prop0U2} are in \cite[Section 3.2.1-3.2.3]{janssen2003a}. \ref{Prop0S} is established in \cite{baxter1996} for Gaussians, in \cite{kloos2014} for exponential $B$-splines of finite order and for finite type totally positive functions without the Gaussian factor, in \cite{kloos2015} for infinite type totally positive functions without the Gaussian factor, and finally in \cite{vinogradov2017} for totally positive functions with the Gaussian factor and for their convolutions with exponential $B$-splines.

\end{document}